\numberwithin{equation}{section}
\definecolor{DPurple}{rgb}{0.46,0.2,0.69}
\theoremstyle{definition}
\newtheorem{definition}{Definition}[section]
\theoremstyle{remark}
\newtheorem{remark}[definition]{Remark}
\theoremstyle{plain}
\newtheorem{theorem}[definition]{Theorem}
\newtheorem{result}[definition]{Result}
\newtheorem{lemma}[definition]{Lemma}
\newtheorem{proposition}[definition]{Proposition}
\newcommand{\zbar}{\overline{z}}
\newcommand{\cHess}{\mathfrak{H}_{\raisebox{-2pt}{$\scriptstyle {\mathbb{C}}$}}}
\newcommand{\dbar}{\overline\partial}
\newcommand{\opCMA}{(dd^c(\bcdot))^n}
\newcommand{\bdy}{\partial}
\newcommand{\cont}{\mathcal{C}(\overline{\Omega})}
\newcommand{\smoo}{\mathcal{C}}
\newcommand{\omgclass}{\mathcal{C}_{\omega}(\overline{\Omega})}
\newcommand{\Bloclass}{\widehat{\mathscr{B}}(\phi, f)}
\newcommand{\bcdot}{\boldsymbol{\cdot}}
\newcommand{\V}{\mathcal{V}}
\newcommand{\Omegabar}{\overline{\Omega}}
\newcommand{\omegatau}{\omega(\|\tau\|)}
\newcommand{\Z}{\mathbb{Z}}
\newcommand{\N}{\mathbb{N}}
\newcommand{\Cn}{\mathbb{C}^n}
\newcommand{\C}{\mathbb{C}} 
\newcommand{\R}{\mathbb{R}}
\begin{document}

\title[H{\"o}lder solutions to the complex Monge--Amp{\`e}re equation]{A necessary and sufficient condition for H{\"o}lder-class solutions to the complex Monge--Amp{\`e}re equation}

\author{Annapurna Banik}
\address{Tata Institute of Fundamental Research,
Centre for Applicable Mathematics, Bengaluru 560065, India}
\email{banikannapurna01@gmail.com, annapurna25@tifrbng.res.in}

\begin{abstract}
We provide a necessary and sufficient condition for the existence of H{\"o}lder continuous
solutions to the complex Monge--Amp{\`e}re equation on bounded domains in $\mathbb{C}^n$.
This condition is motivated by a paper by S.-Y.~Li. We also
prove a result on the regularity of solutions to the complex Monge--Amp{\`e}re equation
on general $B$-regular domains in $\mathbb{C}^n$.
\end{abstract}

\keywords{Complex Monge--Amp{\`e}re equation, H{\"o}lder regular solution, $B$-regular domains.} 
\subjclass[2020]{Primary: 32W20, 32U05 ; Secondary: 32U10, 32T40}

\maketitle

\vspace{-5mm}

\section{Introduction and statement of main results}\label{sec:intro}

Let $\Omega \varsubsetneq \Cn$, $n\geq 2$, be a bounded
domain. Given two functions $\phi \in \smoo(\bdy\Omega; \R)$ and 
$f\in \smoo(\Omega)$, $f\geq 0$, the \emph{Dirichlet problem for the complex
Monge--Amp{\`e}re equation} is the non-linear boundary-value problem that seeks
a function $u \in \smoo(\overline\Omega; \R)$ such that
\begin{align} \label{eqn:D-P_CMA}
  (dd^c{u})^n\,
  :=\,\underbrace{dd^c{u}\wedge\dots \wedge dd^c{u}}_{n \ \text{factors}}\,
  &=\,f\V_n, \ \ u \in {\rm psh}(\Omega) \cap \cont, \\
  u|_{\bdy\Omega}\,&=\,\phi. \notag
\end{align}
Here, $d = (\bdy + \dbar)$ is the exterior derivative, $d^c := i(\bdy - \dbar)$, and
$\V_n$ is defined as
$$
  \V_n := (i/2)^n(dz_1\wedge d\zbar_1)\wedge\dots
  \wedge (dz_n\wedge d\zbar_n).
$$
When $u|_{\Omega}\notin \smoo^2(\Omega; \R)$, the left-hand side of
\eqref{eqn:D-P_CMA} must be interpreted as a current of bidegree $(n, n)$.
That this makes sense when $u\in {\rm psh}(\Omega)\cap \smoo(\Omega)$ was established by
Bedford--Taylor in their seminal paper \cite{bedford-taylor:1976}, where they established
an existence and uniqueness theorem
for \eqref{eqn:D-P_CMA} with continuous data $(f, \phi)$ for strongly pseudoconvex domains.
Later, B\l{}ocki \cite{blocki:1996} showed that such a theorem holds true in a much larger class of
domains, namely, $B$-regular domains (see Section~\ref{subsec:defn-prpn} for the definition and
further discussion). There is extensive literature on the complex Monge--Amp{\`e}re equation
(too numerous to discuss here) in complex analysis, and also in complex geometry;
the reader is referred to \cite{bedford-taylor:1976, caffarelli-kohn-nirenberg-spruck:1985, caffarelli-nirenberg-spruck:1985, kolodziej:1998,
guedj-zeriahi:2017} and the references therein.
\smallskip

The regularity theory for the complex Monge--Amp{\`e}re equation is of great interest.
Loosely speaking, this concerns determining\,---\,assuming $\bdy \Omega$
``nice'' enough\,---\,when the solution to \eqref{eqn:D-P_CMA} has some regularity
(e.g., H{\"o}lder continuity), {\em for all} sufficiently regular data $(f, \phi)$.
The first such result was proved by Bedford--Taylor \cite{bedford-taylor:1976} for $\Omega$
strongly pseudoconvex. We refer the reader to \cite{coman:1997, li:2004,
guedj-kolodziej-zeriahi:2008,
charabati:2015, ha-khanh:2015, baracco-khanh-pinton-zampieri:2016,
nguyen:2018, nguyen:2020} for further work on this subject.
There are many applications of the complex Monge--Amp{\`e}re equation
in complex geometry; see the survey \cite{phong-song-sturm:2012}. There have been
relatively fewer applications of the complex Monge--Amp{\`e}re equation on domains
$\Omega\varsubsetneq \Cn$ outside the realm of PDEs (for $\Omega$ not the unit ball in
$\C^n$). This is largely due to the need for regularity results for solutions to
\eqref{eqn:D-P_CMA}; see \cite{li:2004, banik-bharali:2024, bharali-masanta:2025} for some
applications on domains in $\Cn$. 
\smallskip

An interesting advance in this subject was made by S.-Y.~Li \cite{li:2004}, who proved a
H{\"o}lder-regularity result\,---\,in the spirit of the one by Bedford--Taylor
\cite{bedford-taylor:1976}\,---\,for \eqref{eqn:D-P_CMA} on weakly pseudoconvex domains.
He studied the Dirichlet problem \eqref{eqn:D-P_CMA} on domains that admit a
uniformly strictly plurisubharmonic function $\rho \in {\rm psh}(\Omega) \cap \cont$
(possibly non-smooth), which is the basis for a sufficient condition for the H{\"o}lder regularity
of solutions to \eqref{eqn:D-P_CMA}.
In this paper, we show that such a condition is also necessary.
With these words, we state

\begin{theorem} \label{th:Holder-iff}
Let $\Omega \varsubsetneq \Cn$ be a bounded domain.
The following are equivalent:
\begin{itemize}
    \item[$(a)$] There exist $\varepsilon \in (0,1]$ 
    and a function $\rho \in {\rm psh}(\Omega) \cap \smoo^{0,\,\varepsilon}(\overline{\Omega})$
    such that $ \Omega= {\rho}^{-1}(-\infty, 0)$, $\bdy \Omega = \rho^{-1}\{0\}$,
    and $\rho(\bcdot) - \|\bcdot\|^2$ $ \in {\rm psh}(\Omega)$.

    \smallskip
    
    \item[$(b)$] There exists $\varepsilon \in (0,1]$ such that $\Omega$
    admits a (unique) solution $u$ to the Dirichlet problem
    \eqref{eqn:D-P_CMA} of class $\smoo^{0, \varepsilon}(\overline{\Omega})$,
    for any $\phi \in \smoo^{1, 1}(\bdy \Omega; \R)$
    and for any $f$ such that either $f\equiv 0$ or $f > 0$ with $f \in \smoo^{0,\,\varepsilon}(\overline{\Omega})$.
\end{itemize}
\end{theorem}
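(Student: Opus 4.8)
The two implications have very different flavours: $(a)\Rightarrow(b)$ is a sufficiency statement of the kind established by Li \cite{li:2004}, and essentially all of the analytic work sits there, whereas $(b)\Rightarrow(a)$ is a short deduction. The plan is to dispatch the latter first and then outline the former.

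\emph{For $(b)\Rightarrow(a)$}, I would fix the exponent $\eps\in(0,1]$ furnished by $(b)$ and apply $(b)$ to the single choice of data $f\equiv 1$ and $\phi:=-\|z\|^2\big|_{\bdy\Omega}$. This is admissible, since $f\equiv 1>0$ belongs to $\smoo^{0,\eps}(\Omegabar)$ and $\phi$ is the restriction to $\bdy\Omega$ of the $\smoo^{\infty}$ (in particular $\smoo^{1,1}$) function $-\|z\|^2$. Let $w\in{\rm psh}(\Omega)\cap\smoo^{0,\eps}(\Omegabar)$ be the resulting solution, so that $(dd^cw)^n=\V_n$ and $w|_{\bdy\Omega}=-\|z\|^2$, and put
$$\rho:=w+\|z\|^2.$$
Then $\rho\in\smoo^{0,\eps}(\Omegabar)$, being the sum of a Hölder function and a smooth one; moreover $\rho-\|z\|^2=w\in{\rm psh}(\Omega)$, which in particular makes $\rho$ plurisubharmonic on $\Omega$; and $\rho|_{\bdy\Omega}=0$. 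It remains to verify that $\rho<0$ on $\Omega$, for this is exactly the assertion that $\Omega=\rho^{-1}(-\infty,0)$ and $\bdy\Omega=\rho^{-1}\{0\}$. Since $\rho$ is psh on the connected open set $\Omega$, continuous on $\Omegabar$, and vanishes on $\bdy\Omega$, the maximum principle gives $\rho\le 0$ throughout $\Omegabar$; and if $\rho(p)=0$ at some $p\in\Omega$, then $\rho$ would attain an interior maximum and hence be $\equiv 0$, forcing $w\equiv-\|z\|^2$ on $\Omega$, which is impossible because $w$ is psh while $-\|z\|^2$ is strictly plurisuperharmonic. Thus $\rho<0$ on $\Omega$, and $(a)$ holds with this same $\eps$.

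\emph{For $(a)\Rightarrow(b)$}, I would start from existence. As $\rho$ is a bounded psh exhaustion, $\Omega$ is hyperconvex; and since $\rho-\|z\|^2\in{\rm psh}(\Omega)$ supplies a uniformly strict plurisubharmonic barrier at every boundary point, $\Omega$ is $B$-regular. Błocki's solvability theorem \cite{blocki:1996}, extending Bedford--Taylor \cite{bedford-taylor:1976}, then yields a unique $u\in{\rm psh}(\Omega)\cap\cont$ solving \eqref{eqn:D-P_CMA} for our continuous data, in both the case $f\equiv0$ and the case $f>0$. The real point is the Hölder estimate, which I would obtain by the barrier-and-comparison scheme of Li \cite{li:2004}. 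Fixing a $\smoo^{1,1}$ extension $\Phi$ of $\phi$ to $\Omegabar$ and taking $A>0$ large, the function $\underline u:=\Phi+A\rho$ is psh, equals $\phi$ on $\bdy\Omega$, and satisfies $(dd^c\underline u)^n\ge f\V_n$ (here the uniform strict plurisubharmonicity of $\rho$ is used to dominate both $dd^c\Phi$ and $f$); by the comparison principle $u\ge\underline u$, giving the one-sided bound $u(z)\ge\phi(\zeta)-C\|z-\zeta\|^{\eps}$ for $z\in\Omega$ near $\zeta\in\bdy\Omega$. The matching upper bound would follow by dominating $u$ by the maximal ($f\equiv0$) solution and establishing the Hölder continuity of the latter by the same construction. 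Together these give a Hölder modulus of continuity for $u$ at $\bdy\Omega$.

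The main obstacle is to upgrade this boundary estimate to a \emph{global} Hölder bound on $\Omegabar$. Here I would compare $u$ with its translates $u(\bcdot+\tau)$, $\tau\in\Cn$ small: on the overlap $\Omega\cap(\Omega-\tau)$ the boundary estimate controls $u(\bcdot+\tau)-u$ near $\bdy\Omega$, while the uniform strict plurisubharmonicity recorded in $\rho-\|z\|^2\in{\rm psh}(\Omega)$ is precisely what lets one build a global psh comparison function absorbing both the $O(\|\tau\|^{\eps})$ perturbation of the Monge--Amp\`ere measure caused by the translation (recall $f\in\smoo^{0,\eps}$) and the mismatch of the two domains. The comparison principle should then deliver $\sup_{\Omega\cap(\Omega-\tau)}|u(\bcdot+\tau)-u|\le C\|\tau\|^{\eps'}$, i.e.\ global Hölder continuity. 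I expect this translation argument\,---\,and in particular the bookkeeping that keeps the exponent uniform and ties it back to the $\eps$ of $(a)$\,---\,to be the technically hardest part; the existence theory and the maximum-principle computation in $(b)\Rightarrow(a)$ are comparatively routine.
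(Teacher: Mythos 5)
Your direction $(b)\Rightarrow(a)$ is correct and is essentially the paper's own argument: the paper applies $(b)$ with $f\equiv 0$ where you use $f\equiv 1$, but either datum is admissible, and the rest\,---\,setting $\rho := u + \|\bcdot\|^2$, checking $\rho\in{\rm psh}(\Omega)\cap\smoo^{0,\varepsilon}(\Omegabar)$, $\rho-\|\bcdot\|^2\in{\rm psh}(\Omega)$, $\rho|_{\bdy\Omega}=0$, and invoking the (strong) maximum principle to rule out an interior zero\,---\,is the same computation, which you carry out completely.

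The direction $(a)\Rightarrow(b)$, however, contains a genuine gap. Your architecture ($B$-regularity plus B\l{}ocki's theorem (Result~\ref{res:B-reg_CMA-soln}) for existence; psh barriers built from $\rho$ for the boundary estimate; a Walsh-type translation argument \cite{walsh:1968} for the global estimate) coincides with the paper's proof of Theorem~\ref{th:general-reg-CMA}, but the step you explicitly defer\,---\,``the comparison principle should then deliver'' the bound on $u(\bcdot+\tau)-u$\,---\,is precisely where all the work lies. Concretely, one must prove that the translated-and-perturbed candidate $v_\tau(z) = u(z+\tau) + C_f\|\tau\|^{\varepsilon}\|z\|^2 - (K_1+K')\|\tau\|^{\varepsilon}$ satisfies $(dd^c v_\tau)^n \geq f\,\V_n$ on $\Omega\cap(\Omega-\tau)$, so that the glued function lies in the Perron--Bremermann family and is dominated by $u$. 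Since $u$ is merely continuous, this is not a pointwise Hessian computation: the paper regularizes $u$, applies the Ha--Khanh determinant inequality $\det({\sf M}+\beta\,{\sf I}_n)\geq \sum_{k}\beta^k(\det {\sf M})^{(n-k)/n}$ (Lemma~\ref{l:positive-matrix}) to the smoothings, chooses $C_f$ in terms of the H{\"o}lder constant of $f^{1/n}$, and passes to weak limits; this is the entire content of Step~3 there, and it is exactly the point at which Li's original proof \cite{li:2004} is found to be unclear (Remark~\ref{rem:detailed-proof}), so it cannot be waved through. Relatedly, the hypothesis ``$f\equiv 0$ or $f>0$'' is in the statement precisely so that $f\in\smoo^{0,\varepsilon}(\Omegabar)$ forces $f^{1/n}\in\smoo^{0,\varepsilon}(\Omegabar)$, which is what the determinant bookkeeping requires; this never surfaces in your sketch. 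Two smaller inaccuracies: the absorption of the $O(\|\tau\|^{\varepsilon})$ error comes from adding the uniformly strictly psh term $C_f\|\tau\|^{\varepsilon}\|z\|^2$ directly, not from $\rho-\|\bcdot\|^2\in{\rm psh}(\Omega)$ (whose role is confined to $B$-regularity and the barriers); and your boundary upper bound does not work as stated\,---\,dominating $u$ by the maximal $f\equiv 0$ solution only shifts the problem, since the lower-barrier construction gives no upper bound; the clean route is the paper's second barrier $-\phi+K\rho\in{\rm psh}(\Omega)$ (Lemma~\ref{l:aux-psh}), after which $u+(-\phi+K\rho)$ is psh with zero boundary values and the maximum principle finishes.
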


The condition $(a)$ is strongly motivated by Li \cite{li:2004} as alluded to above. In fact,
the ``harder'' implication $(a)\!\Rightarrow\!(b)$ in Theorem~\ref{th:Holder-iff} is
{\em essentially} \cite[Corollary~2.2]{li:2004}. However, we shall prove a useful
generalization of \cite[Corollary~2.2]{li:2004}\,---\,see
Theorem~\ref{th:general-reg-CMA}\,---\,from which this implication also follows. An essential
discussion on these matters is deferred to
Remark~\ref{rem:detailed-proof}. Domains that satisfy condition~$(a)$ in
Theorem~\ref{th:Holder-iff} are abundant; this will be discussed in Section~\ref{sec:two-proofs}.%
\smallskip

We must mention that in order to prove this theorem, no assumptions on the
regularity of $\bdy \Omega$ are required a priori. In what follows,
$\phi \in \smoo^{1, 1}(\bdy \Omega)$ means that it is the restriction to $\bdy \Omega$ of a
$\mathcal{C}^{1,1}$-smooth function defined on an open neighbourhood of $\bdy \Omega$.
Also note,
condition~$(a)$ (and, equivalently, condition~$(b)$) implies that $\Omega$ is pseudoconvex.
Since Theorem~\ref{th:Holder-iff} focuses on $f\equiv 0$ or $f > 0$ and
$\phi \in \smoo^{1, 1}(\bdy \Omega)$, two remarks are in order:

\begin{remark} \label{rem:density-data}
We can establish a more general result than Theorem~\ref{th:Holder-iff} by
relaxing the condition on $f$, but doing so results in a statement involving
$f^{1/n}$ that is more technical, while we prefer the more aesthetic result.
Moreover, we are motivated by the applications of the regularity theory for
the complex Monge--Amp{\`e}re equation outside the realm of PDEs, where the
conditions on $f$ as stated in Theorem~\ref{th:Holder-iff} are more frequently
encountered (e.g., $f \equiv 0$ relates to the Perron--Bremermann envelope).
Furthermore, the case when $f \geq 0$ will be discussed in connection with
a more general result as alluded to above\,---\,see Theorem~\ref{th:general-reg-CMA} (and also Theorem~\ref{th:B-reg-CMA}).
\end{remark}

\begin{remark} \label{rem:bdy-value-data}
The choice for the regularity of the boundary data $\phi$ in Theorem~\ref{th:Holder-iff}
is natural. Firstly, one sees from the proofs in \cite{li:2004, guedj-kolodziej-zeriahi:2008}
that the description for a class of boundary data that are strictly less regular
than $\smoo^{1,1}(\bdy \Omega)$, and for which the solution to the Dirichlet problem
\eqref{eqn:D-P_CMA} is in H{\"o}lder class, would be quite technical. Secondly,
in most applications of the regularity theory for the Dirichlet problem outside
the realm of PDEs, one usually works with $\phi \in \smoo^{\infty}(\bdy \Omega)$\,---\,see
\cite{li:2004, phong-song-sturm:2012, banik-bharali:2024, bharali-masanta:2025},
for instance.
\end{remark}

In addition to proving one of the implications of Theorem~\ref{th:Holder-iff},
Theorem~\ref{th:general-reg-CMA} has an amusing consequence, which is our second main
result. Suppose $\Omega \varsubsetneq \Cn$ is a $B$-regular domain. As mentioned earlier in this
section, $\Omega$ admits a unique solution to \eqref{eqn:D-P_CMA} for all
admissible $(f, \phi)$; see Result~\ref{res:B-reg_CMA-soln}. However, in general,
it was not known whether these solutions are H{\"o}lder continuous, even when
$f, \ \phi$ are very regular. Recently, Bharali--Masanta \cite{bharali-masanta:2025}
provided a negative result in this direction. See Section~\ref{sec:two-proofs}
for more details, which will also suggest that it is meaningful to ask:
{\em is there a (concave) modulus of continuity $\omega$\,---\,that depends only on
$\Omega$\,---\,such that \eqref{eqn:D-P_CMA} has a solution in ${\rm psh}(\Omega) \cap \omgclass$
{\bf for all} admissible data $(f, \phi)$?} (See \eqref{eqn:omega-class} for the definition of
$\omgclass$.)
Theorem~\ref{th:general-reg-CMA} addresses this question and leads to an affirmative answer
as stated below. To the best of our knowledge,
no such regularity result for solutions to the complex Monge--Amp{\`e}re equation
on {\em general} $B$-regular domains is known in the literature.
\begin{theorem} \label{th:B-reg-CMA}
Let $\Omega \varsubsetneq \Cn$ be a $B$-regular domain. Then, there exists a modulus of continuity
$\omega$ such that $\Omega$ admits a solution $u$ to \eqref{eqn:D-P_CMA}
of class $\smoo_{\omega}(\overline{\Omega})$ for any $\phi \in \smoo^{1,1}(\bdy \Omega; \R)$
and for any $f \geq 0$ satisfying
$f^{1/n} \in \smoo_{\omega}(\overline{\Omega})$.
\end{theorem}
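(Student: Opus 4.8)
The plan is to deduce Theorem~\ref{th:B-reg-CMA} from Theorem~\ref{th:general-reg-CMA} by exhibiting, on every $B$-regular domain, a continuous\,---\,as opposed to merely H\"older\,---\,analogue of the exhaustion function appearing in condition~$(a)$ of Theorem~\ref{th:Holder-iff}, and by taking its (concave) modulus of continuity to be the $\omega$ we seek.

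First I would invoke the characterization of $B$-regularity (see Section~\ref{subsec:defn-prpn}): a bounded domain $\Omega$ is $B$-regular if and only if it admits a continuous plurisubharmonic exhaustion function that is uniformly strictly plurisubharmonic. After a rescaling, this produces a function $\rho \in {\rm psh}(\Omega) \cap \smoo(\overline{\Omega})$ satisfying $\Omega = \rho^{-1}(-\infty, 0)$, $\bdy\Omega = \rho^{-1}\{0\}$, and $\rho(\bcdot) - \|\bcdot\|^2 \in {\rm psh}(\Omega)$\,---\,i.e., exactly the hypotheses of $(a)$ except that $\rho$ need not be H\"older continuous. This is the only point at which the $B$-regularity assumption enters the argument.

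Next, since $\rho$ is continuous on the compact set $\overline{\Omega}$, it is uniformly continuous and hence admits a modulus of continuity; I would fix a \emph{concave} modulus of continuity $\omega$ dominating it (one may always pass to the least concave majorant, and concavity is what the definition \eqref{eqn:omega-class} of $\smoo_{\omega}(\overline{\Omega})$ requires). Crucially, $\omega$ depends only on $\rho$, hence only on $\Omega$. With $\rho$ and this $\omega$ in hand, the proof is completed by a direct appeal to Theorem~\ref{th:general-reg-CMA}, which is precisely the generalization of \cite[Corollary~2.2]{li:2004} in which the H\"older exponent is replaced by an arbitrary concave modulus of continuity: feeding this $\rho$ into that theorem yields, for every $\phi \in \smoo^{1,1}(\bdy\Omega;\R)$ and every $f \geq 0$ with $f^{1/n} \in \smoo_{\omega}(\overline{\Omega})$, a solution $u$ to \eqref{eqn:D-P_CMA} of class $\smoo_{\omega}(\overline{\Omega})$. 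Here one uses that $\phi \in \smoo^{1,1}$ is Lipschitz, and that for a concave $\omega$ with $\omega(0)=0$ the ratio $\omega(t)/t$ is nonincreasing, so that every Lipschitz function is automatically $\omega$-continuous; thus both $\phi$ and $f^{1/n}$ are $\omega$-regular, and the modulus of the solution collapses to $\omega$.

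The main obstacle I anticipate lies not in this reduction but in the \emph{uniformity}: one must verify that Theorem~\ref{th:general-reg-CMA} furnishes a modulus for $u$ governed by $\omega$ with constants independent of the particular data $(f, \phi)$, so that a single $\omega$ (depending only on $\Omega$) serves all admissible data simultaneously. This rests on the barrier-and-comparison mechanism underlying Theorem~\ref{th:general-reg-CMA}: the exhaustion $\rho$ supplies, via translation together with the comparison principle, an interior modulus estimate whose data-dependence enters only through fixed-norm bounds on $\phi$ in $\smoo^{1,1}(\bdy\Omega)$ and on $f^{1/n}$ in $\smoo_{\omega}(\overline{\Omega})$, both of which are absorbed into $\omega$ after enlarging it by a multiplicative constant and re-taking the concave majorant. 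Once this uniform dependence is extracted, Theorem~\ref{th:B-reg-CMA} follows.
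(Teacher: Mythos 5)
Your proposal is correct and follows essentially the same route as the paper: invoke Sibony's characterization (Result~\ref{res:B-reg_equiv-prop}, condition~$(b)$) to obtain a continuous, uniformly strictly plurisubharmonic exhaustion $\rho$ with $\rho(\bcdot)-\|\bcdot\|^2 \in {\rm psh}(\Omega)$ and $\Omega = \rho^{-1}(-\infty,0)$, take for $\omega$ the least concave majorant of the modulus of continuity of $\rho$, and apply Theorem~\ref{th:general-reg-CMA}. Your closing worry about uniformity is moot: since the constant $C$ in the definition \eqref{eqn:omega-class} of $\smoo_{\omega}(\overline{\Omega})$ is allowed to depend on the function, Theorem~\ref{th:general-reg-CMA}, applied verbatim with this fixed $\rho$ and $\omega$, already yields the conclusion for all admissible data $(f,\phi)$ simultaneously.
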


\section{Analytical preliminaries}\label{sec:premln}

In this section, we note some important definitions and facts that are relevant to
our proofs.%
\smallskip

\subsection{Common notations} \label{subsec:notations}
We write down a list of recurring notations (some of which were used without
comment in the previous section). In this list, $\Omega$ denotes a domain in $\Cn$.
\begin{enumerate}
  \item ${\rm psh}(\Omega)$ denotes the class of all plurisubharmonic functions
  defined on $\Omega$.
  \smallskip
  
  \item Given a $\smoo^2$-smooth function $\psi:\Omega \to \C$, $(\cHess{\psi})(z)$ is the
  complex Hessian of $\psi$ at $z \in \Omega$.%
  \smallskip
  
  \item For $v \in \R^d$, $\|v\|$ denotes the Euclidean norm. Given a set non-empty set
  $S \subset \R^d$, ${\rm diam}(S): = \sup \{\|a-b\|: a, b \in S\}$.
  \smallskip
  
  \item Let $S (\neq \emptyset) \subset \Cn$. For fixed $j \in \N$, $\beta \in (0,1]$,
  $\smoo^{j,\,\beta}(S)$ is the class of all functions that are continuously
  differentiable to order $j$ and whose $j$-th partial derivatives satisfy a uniform H{\"o}lder
  condition with exponent $\beta$. $\smoo(S)$ denotes the class of continuous functions.
  The subclasses $\smoo^{j,\,\beta}(S; \R) \subset \smoo^{j,\,\beta}(S)$ and
  $\smoo(S; \R) \subset \smoo(S)$ consist of $\R$-valued functions.
\end{enumerate}

\subsection{The complex Monge--Amp{\`e}re operator}
We briefly recall some basic facts about the complex Monge--Amp{\`e}re operator
$\opCMA$, as featured in \eqref{eqn:D-P_CMA}, which will be relevant to the proof of
Theorem~\ref{th:general-reg-CMA}. Let $\Omega \subset \Cn$ be a domain.
Let $\mathcal{M}_{n}(\Omega)$ denotes the class of currents on $\Omega$
of bidegree $(n, n)$ and of order $0$, equipped with  the usual topology
of weak convergence. It is well known that elements of $\mathcal{M}_{n}(\Omega)$
are identified with $(n,n)$-differential forms with measure coefficients.
\smallskip

Firstly, an easy computation yields, for any
$\smoo^2$-smooth function $U$ on $\Omega$,
\begin{equation} \label{eqn:An}
  (dd^c{U})^n\,
  :=\,\underbrace{dd^c{U}\wedge\dots \wedge dd^c{U}}_{n \ \text{factors}}
  = A_n (\det \cHess U ) \V_n, \quad \text{where } A_n:= 4^n n \, !\, .
\end{equation}
Suppose $U \in {\rm psh}(\Omega) \cap \smoo^2(\Omega)$, then $\det \cHess U \geq 0$. Therefore,
by \eqref{eqn:An},
$ (dd^c{U})^n$ is a positive $(n,n)$-current and hence is in $\mathcal{M}_{n}(\Omega)$.
Bedford--Taylor \cite[Proposition~2.3]{bedford-taylor:1976} showed that there exists
a unique continuous operator
$$
  (dd^c(\bcdot))^n: {\rm psh}(\Omega) \cap \smoo(\Omega) \to \mathcal{M}_{n}(\Omega)
$$
that extends $(dd^c(\bcdot))^n|_{{\rm psh}(\Omega) \cap \smoo^2(\Omega)}$ (which is
to be understood in the usual sense as in \eqref{eqn:An}).
As ${\rm psh}(\Omega) \cap \smoo^2(\Omega)$ is dense in ${\rm psh}(\Omega) \cap \smoo(\Omega)$, $(dd^c{u})^n$ is a positive
$(n,n)$-current for every $u \in {\rm psh}(\Omega) \cap \smoo(\Omega)$.
For more details about $\opCMA$, see, for instance, \cite{bedford-taylor:1976},
\cite{blocki:1996}, \cite{guedj-zeriahi:2017}.
\smallskip

\subsection{The complex Monge--Amp{\`e}re equation on $B$-regular domains}
\label{subsec:defn-prpn} 
We begin this section with a result of Sibony \cite{sibony:1987} that encodes the
definition of $B$-regular domains.
\begin{result}[paraphrasing {\cite[Th{\'e}or{\`e}me~2.1]{sibony:1987}}]
\label{res:B-reg_equiv-prop}
Let $\Omega \varsubsetneq \Cn$ be a bounded domain. The following are equivalent.
\begin{itemize}
  \item[$(a)$] For each $\zeta \in \bdy \Omega$, $\Omega$ admits a strong plurisubharmonic barrier,
  i.e., there exists $v_{\zeta} \in {\rm psh}(\Omega)$ such that 
  $$
    \lim_{\Omega \ni z \to \zeta} v_{\zeta}(z)=0 \text{   and   }
    \limsup v_{\zeta}|_{\Omegabar \setminus \{\zeta\} } < 0.
  $$
  
  \item[$(b)$] $\Omega$ admits an exhaustion $\psi \in {\rm psh}(\Omega) \cap \smoo^{\infty}(\Omega) \cap \cont$
  with $\psi|_{\bdy \Omega}\equiv 0$ such that
  $$
    \sum_{j,k =1}^{n} \frac{\bdy^2 \psi}{\bdy z_j\bdy \overline{z}_k} v_j \overline{v}_k \geq \|v\|^2,
    \quad \forall v=(v_1, \cdots, v_n) \in \Cn.
  $$

  \item[$(c)$] For every function $f \in \smoo(\Omegabar; \R)$, there exists $v \in
  {\rm psh} (\Omega) \cap \cont$ such that $v|_{\bdy \Omega} = f$.
\end{itemize}
\end{result}

\begin{definition}\label{defn:B-reg}
A bounded domain $\Omega \varsubsetneq \Cn$ is said to be \emph{$B$-regular} if it satisfies
any of the equivalent conditions stated in Result~\ref{res:B-reg_equiv-prop}.
\end{definition}

\begin{remark}
In \cite{sibony:1987}, Sibony introduced the notion of {\em $B$-regularity}
of the boundary $\bdy \Omega$ of a bounded domain $\Omega \varsubsetneq \Cn$.
In fact, $\bdy \Omega$ being $B$-regular, as defined in \cite{sibony:1987},
is equivalent to each of the conditions stated in
Result~\ref{res:B-reg_equiv-prop} under the assumption that $\bdy \Omega$ is
$\smoo^1$-smooth\,---\,see \cite[Th{\'e}or{\`e}me~2.1]{sibony:1987}. However, we retain
Definition~\ref{defn:B-reg} in its form as it will be useful for our proofs. In particular,
some of our arguments rely on the results in \cite{blocki:1996}, where $B$-regularity
of $\Omega$ is defined as in Definition~\ref{defn:B-reg}\,---\,see \cite[p.~728]{blocki:1996}.
\end{remark}
\smallskip

We shall now state the result by B\l{}ocki \cite{blocki:1996}, as alluded to in
Section~\ref{sec:intro}, regarding the existence and uniqueness of solution to \eqref{eqn:D-P_CMA}
on $B$-regular domains. To do so, we first define the following Perron--Bremermann family:
given $\phi \in \smoo(\bdy \Omega; \R)$, $f \in \smoo(\overline{\Omega})$ with $f \geq 0$,
\begin{equation}\label{eqn:Blo-PB-envp}
\widehat{\mathscr{B}}(\phi, f) := \{ v \in {\rm psh}(\Omega) \cap \smoo(\overline{\Omega}):
v|_{\bdy \Omega}= \phi, \ (dd^c v)^n \geq f \V_n \}.
\end{equation}

\begin{result}[paraphrasing {\cite[Theorem~4.1]{blocki:1996}}] \label{res:B-reg_CMA-soln}
Let $\Omega \varsubsetneq \Cn$ be a $B$-regular domain. Let $\phi \in \smoo(\bdy \Omega; \R)$ and
$f \in \smoo(\overline{\Omega})$ with $ f \geq 0$. Then, there exists a unique solution
$u \in {\rm psh}(\Omega) \cap \smoo (\overline{\Omega})$ to the Dirichlet problem
\eqref{eqn:D-P_CMA}, where the solution can be taken of the form
$$
  u(z) = \sup \{ v(z): v \in \widehat{\mathscr{B}} (\phi, f) \}.
$$
\end{result}

The above result is strongly related to \cite[Theorem~8.3]{bedford-taylor:1976}.
We mention that certain other Perron--Bremermann families corresponding to continuous data
$(f, \phi)$\,---\,which are larger than the one defined by
\eqref{eqn:Blo-PB-envp}\,---\,are also considered in the literature to
describe the solution to \eqref{eqn:D-P_CMA}; see, for example,
\cite{bedford-taylor:1976, li:2004, ha-khanh:2015}. We note a couple of such families
that appear frequently:
\begin{align} \label{eqn:PB-envp}
  \mathscr{F}(\phi, f) &:= \{ v \in {\rm psh}(\Omega) \cap \smoo(\overline{\Omega}): v|_{\bdy \Omega} \leq \phi,
  \ (dd^c v)^n \geq f \V_n \},  \notag \\
  \mathscr{B}(\phi,f ) &:= \{ v \in {\rm psh}(\Omega) \cap \smoo(\Omega): \limsup v|_{\bdy \Omega} \leq \phi,
  \ (dd^c v)^n \geq f \V_n \}.
\end{align}
The following proposition asserts that the upper envelopes of all Perron--Bremermann
families defined above coincide in the case of $B$-regular domains. We conclude this
section stating the result; the proof is straightforward and will be omitted.
\begin{proposition} \label{prpn:eq-envelopes}
Let $\Omega \varsubsetneq \Cn$ be a $B$-regular domain. Let $\phi \in \smoo(\bdy \Omega; \R)$
and $f \in \smoo(\overline{\Omega})$ with $ f \geq 0$. Then,
$$
  \sup \{ v(z): v \in \Bloclass \} = \sup \{ v(z): v \in \mathscr{F}(\phi, f) \}
  = \sup \{ v(z): v \in \mathscr{B}(\phi, f) \} \quad \forall z \in \Omega.
$$   
\end{proposition}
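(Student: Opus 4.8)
The plan is to first record the obvious chain of inclusions among the three families and then close the loop with a single comparison-principle estimate. Write $U_1, U_2, U_3$ for the upper envelopes of $\Bloclass$, $\mathscr{F}(\phi,f)$, and $\mathscr{B}(\phi,f)$, respectively. I would begin by observing that each membership condition is weaker than the preceding one: a function with $v|_{\bdy\Omega}=\phi$ certainly satisfies $v|_{\bdy\Omega}\le\phi$, and any $v\in\smoo(\overline{\Omega})$ with $v|_{\bdy\Omega}\le\phi$ lies in ${\rm psh}(\Omega)\cap\smoo(\Omega)$ and has $\limsup_{\Omega\ni z\to\zeta}v(z)=v(\zeta)\le\phi(\zeta)$ for every $\zeta\in\bdy\Omega$, by continuity. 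Hence $\Bloclass\subset\mathscr{F}(\phi,f)\subset\mathscr{B}(\phi,f)$, and taking pointwise suprema yields $U_1\le U_2\le U_3$ on $\Omega$. It therefore suffices to prove the reverse inequality $U_3\le U_1$.

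Next I would invoke $B$-regularity to pin down $U_1$. By Result~\ref{res:B-reg_CMA-soln}, the envelope $U_1$ is precisely the unique solution $u\in{\rm psh}(\Omega)\cap\smoo(\overline{\Omega})$ of the Dirichlet problem \eqref{eqn:D-P_CMA}; in particular, $u$ is continuous up to $\overline{\Omega}$, attains $u|_{\bdy\Omega}=\phi$, and satisfies $(dd^c u)^n=f\V_n$. This is the crucial upgrade: it converts $U_1$ from a mere supremum into an honest solution with prescribed (and genuinely attained) boundary values and with Monge--Amp\`ere measure exactly $f\V_n$.

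The heart of the argument is then a comparison between $u$ and an arbitrary competitor $w\in\mathscr{B}(\phi,f)$. Such a $w$ lies in ${\rm psh}(\Omega)\cap\smoo(\Omega)$, satisfies $(dd^c w)^n\ge f\V_n=(dd^c u)^n$ in $\Omega$, and has $\limsup_{\Omega\ni z\to\zeta}w(z)\le\phi(\zeta)=u(\zeta)$ for every $\zeta\in\bdy\Omega$; the last equality uses the boundary continuity of $u$ and yields $\liminf_{\Omega\ni z\to\zeta}(u(z)-w(z))\ge 0$. I would then apply the Bedford--Taylor comparison principle \cite{bedford-taylor:1976}: since $(dd^c u)^n\le(dd^c w)^n$ in $\Omega$ and $u-w$ has nonnegative lower boundary limits, it follows that $u\ge w$ throughout $\Omega$. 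Taking the supremum over all $w\in\mathscr{B}(\phi,f)$ gives $U_3\le u=U_1$, which together with $U_1\le U_2\le U_3$ forces $U_1=U_2=U_3$ on $\Omega$.

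The main (and essentially only) subtlety to watch is the boundary behaviour in the comparison step: a competitor $w\in\mathscr{B}(\phi,f)$ need not extend continuously to $\overline{\Omega}$, and its boundary values are controlled only in the $\limsup$ sense, so one must use the form of the comparison principle phrased through $\liminf_{z\to\bdy\Omega}(u-w)\ge 0$ rather than a pointwise boundary comparison. This is exactly where the continuity of the $B$-regular solution $u$ up to $\overline{\Omega}$\,---\,guaranteed by Result~\ref{res:B-reg_CMA-soln}\,---\,is indispensable, as it is what turns the one-sided bound $\limsup w|_{\bdy\Omega}\le\phi$ into the required lower boundary bound on $u-w$.
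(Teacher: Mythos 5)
Your overall strategy is sound, and it is surely the ``straightforward'' argument the paper had in mind when it omitted the proof: the trivial chain of inclusions $\Bloclass\subset\mathscr{F}(\phi,f)\subset\mathscr{B}(\phi,f)$ gives $U_1\le U_2\le U_3$, Result~\ref{res:B-reg_CMA-soln} identifies $U_1$ with the B\l{}ocki solution $u$ (continuous up to $\Omegabar$, with boundary values $\phi$ and $(dd^cu)^n=f\V_n$), and a comparison estimate closes the loop. All of these reductions are correct.

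The one step that is not justified as written is the appeal to ``the Bedford--Taylor comparison principle \cite{bedford-taylor:1976}''. The comparison (minimum) principle proved in \cite{bedford-taylor:1976} is stated for functions in ${\rm psh}(\Omega)\cap\cont$ with a pointwise comparison on $\bdy\Omega$; your competitor $w\in\mathscr{B}(\phi,f)$ is only continuous on $\Omega$, so that statement does not apply to the pair $(u,w)$. The version you actually need\,---\,with the $\liminf$ boundary hypothesis\,---\,comes from Bedford--Taylor's later (1982) work and is stated for \emph{bounded} plurisubharmonic functions (see, e.g., \cite[Chapter~3]{klimek:1991}); but nothing in the definition of $\mathscr{B}(\phi,f)$ forces $w$ to be bounded below ($w$ is bounded above by $\max\phi$ via the maximum principle, but that is all). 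The gap is easily repaired in either of two standard ways. (i) Replace $w$ by $\max\{w,u\}$: this function is bounded, satisfies $\limsup \max\{w,u\}|_{\bdy\Omega}\le\phi$, and has $(dd^c\max\{w,u\})^n\ge f\V_n$ by \cite[Proposition~2.8]{bedford-taylor:1976} (the same gluing fact the paper uses in Step~3 of the proof of Theorem~\ref{th:general-reg-CMA}); your comparison argument then gives $\max\{w,u\}\le u$, i.e., $w\le u$. (ii) Alternatively, run the comparison principle on the open set $G:=\{w>u\}$, on which both functions are bounded (there $w>u\ge\inf_{\Omegabar}u>-\infty$ and $w\le\max\phi$): at points of $\bdy G\cap\Omega$ one has $u=w$ by continuity, and at points of $\bdy G\cap\bdy\Omega$ one has $\liminf(u-w)\ge 0$ as you argued, so comparison on $G$ forces $G=\emptyset$. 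With either patch your proof is complete. Incidentally, patch (i) points to a route that bypasses the comparison principle (and even the uniqueness assertion of Result~\ref{res:B-reg_CMA-soln}) altogether: for any $v\in\Bloclass$ and $w\in\mathscr{B}(\phi,f)$, the function $\max\{w,v\}$ has Monge--Amp{\`e}re measure $\ge f\V_n$ by \cite[Proposition~2.8]{bedford-taylor:1976} and extends continuously to $\Omegabar$ with boundary values $\phi$ (since $\liminf\max\{w,v\}\ge\lim v=\phi$ and $\limsup\max\{w,v\}\le\phi$ at each boundary point), hence lies in $\Bloclass$; this yields $U_3\le U_1$ directly.
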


\section{The proof of Theorem~\ref{th:general-reg-CMA}}\label{sec:general-reg-result}   
\smallskip

This section is devoted to proving Theorem~\ref{th:general-reg-CMA}, the more general as
alluded to in Section~\ref{sec:intro}. Among other things, this result provides
the proof of one of the implications in Theorem~\ref{th:Holder-iff}.
We need the following definitions before we state this result.

\begin{definition}\label{defn:mod-cont}
A function $\omega: [0,\infty)\to [0,\infty)$ is called a \emph{modulus of
continuity} if it is concave, monotone increasing, and such that
$\lim_{x\to 0^+}\omega(x) = \omega(0) = 0$.
\end{definition}
With $\omega$ as above, we define
\begin{equation}\label{eqn:omega-class}
  \smoo_{\omega}(\overline{\Omega}):= \{\psi\in \smoo(\overline{\Omega}): \exists C>0 \text{ such that }
  |\psi(x)- \psi (y)| \leq C \omega (\|x-y \|) \ \forall x, y \in \overline{\Omega}\}.
\end{equation}

\begin{theorem}\label{th:general-reg-CMA}
Let $\Omega \varsubsetneq \Cn$ be a bounded domain. Assume that there exists a modulus of
continuity $\omega$ and a function $\rho \in {\rm psh}(\Omega) \cap
\smoo_{\omega}(\overline{\Omega}) $ such that $ \Omega= {\rho}^{-1}(-\infty, 0)$,
$\bdy \Omega = \rho^{-1}\{0\}$, and $\rho(\bcdot) -  \|\bcdot\|^2 \in {\rm psh}(\Omega)$.
Then, $\Omega$ admits a solution $u$ to \eqref{eqn:D-P_CMA} of class
$\smoo_{\omega}(\overline{\Omega})$ for any $\phi \in \smoo^{1,1}(\bdy \Omega; \R)$ and
for any $f \geq 0$ satisfying $f^{1/n} \in \smoo_{\omega}(\overline{\Omega})$.
\end{theorem}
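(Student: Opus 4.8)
The plan is to first upgrade the hypotheses to the existence of a (unique) continuous solution, and then to establish a two-sided modulus-of-continuity estimate for it. First I would observe that for each $\zeta \in \bdy\Omega$ the function $v_\zeta(z) := \rho(z) - \|z-\zeta\|^2$ is plurisubharmonic (since $v_\zeta = (\rho - \|\cdot\|^2) + (2\re\langle \cdot, \zeta\rangle - \|\zeta\|^2)$ is a psh function plus a pluriharmonic one), vanishes at $\zeta$, and is strictly negative on $\overline{\Omega}\setminus\{\zeta\}$; hence $\Omega$ satisfies Result~\ref{res:B-reg_equiv-prop}$(a)$ and is $B$-regular. Result~\ref{res:B-reg_CMA-soln} then furnishes the unique solution $u \in {\rm psh}(\Omega) \cap \smoo(\overline{\Omega})$, realized as the upper envelope $\sup\{v : v \in \Bloclass\}$ (and, by Proposition~\ref{prpn:eq-envelopes}, as the other envelopes too). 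Everything reduces to showing $u \in \smoo_{\omega}(\overline{\Omega})$, and I would obtain this in the two classical steps of Bedford--Taylor and Li: a boundary modulus estimate, followed by a translation argument that propagates it to the interior.

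For the boundary estimate I would use the hypothesis $\rho - \|\cdot\|^2 \in {\rm psh}(\Omega)$, i.e.\ $dd^c\rho \geq dd^c\|\cdot\|^2$, to build barriers that are linear in $\rho$ out of a $\smoo^{1,1}$ extension $\tilde\phi$ of $\phi$. Since $\cHess\tilde\phi$ is bounded, for $A$ large the function $\tilde\phi + A\rho$ is psh, and because $dd^c\rho$ dominates the Euclidean form, $(dd^c(\tilde\phi + A\rho))^n \geq f\V_n$ once $A$ is chosen in terms of $\sup_{\overline{\Omega}} f$; thus $\tilde\phi + A\rho \in \Bloclass$ and $u \geq \tilde\phi + A\rho$. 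Dually, for $A$ large $-(\tilde\phi - A\rho)$ is psh, so any competitor $v \in \Bloclass$ has $v - (\tilde\phi - A\rho)$ psh with nonpositive boundary values, whence $v \leq \tilde\phi - A\rho$ and $u \leq \tilde\phi - A\rho$. Combining these, and using $|\tilde\phi(z) - \phi(\zeta)| \lesssim \|z-\zeta\|$, $|\rho(z)| = |\rho(z)-\rho(\zeta)| \lesssim \omega(\|z-\zeta\|)$, together with the elementary fact that concavity of $\omega$ forces $t \lesssim \omega(t)$ on a bounded range, yields $|u(z) - \phi(\zeta)| \leq C\,\omega(\|z-\zeta\|)$ for all $z \in \overline{\Omega}$ and $\zeta \in \bdy\Omega$.

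The heart of the proof is the interior estimate. For $\tau \in \Cn$ small, set $\Omega_\tau := \Omega \cap (\Omega - \tau)$ and consider the translate $u(\cdot + \tau)$, a psh function with $(dd^c u(\cdot+\tau))^n = f(\cdot+\tau)\V_n$. Because $f(\cdot+\tau)$ and $f(\cdot)$ differ, I would correct the translate by a multiple of $\rho$ and a constant, defining
$$ \tilde w(z) := u(z+\tau) + C'\,\omega(\|\tau\|)\,\rho(z) - C\,\omega(\|\tau\|). $$
Here $C'\omega(\|\tau\|)\rho$ is psh with $(dd^c(C'\omega(\|\tau\|)\rho))^n \geq (C'\omega(\|\tau\|))^n(dd^c\|\cdot\|^2)^n$; the Minkowski-type inequality $[(dd^c(a+b))^n]^{1/n} \geq [(dd^c a)^n]^{1/n} + [(dd^c b)^n]^{1/n}$ for continuous psh $a,b$, combined with $|f^{1/n}(z) - f^{1/n}(z+\tau)| \leq L\,\omega(\|\tau\|)$ (this is exactly where $f^{1/n} \in \smoo_{\omega}(\overline{\Omega})$ enters), then gives $(dd^c\tilde w)^n \geq f\V_n = (dd^c u)^n$ for $C'$ large. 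On $\bdy\Omega_\tau$ every point lies in $\bdy\Omega$ or translates into $\bdy\Omega$; in either case the boundary estimate (applied to a pair of points at distance $\|\tau\|$) together with $\rho \leq 0$ gives $\tilde w \leq u$. The comparison principle of Bedford--Taylor \cite{bedford-taylor:1976} then yields $\tilde w \leq u$ on $\Omega_\tau$, and since $\rho$ is bounded below this reads $u(z+\tau) - u(z) \leq C''\,\omega(\|\tau\|)$; symmetry in $\pm\tau$ upgrades it to $|u(z+\tau) - u(z)| \leq C''\,\omega(\|\tau\|)$. As every pair $x, y \in \Omega$ gives $x \in \Omega_{y-x}$, and $u$ is continuous on $\overline{\Omega}$, this is precisely $u \in \smoo_{\omega}(\overline{\Omega})$.

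I expect the main obstacle to be the density mismatch in the translation step: making $\tilde w$ a genuine subsolution for the \emph{fixed} density $f$ despite $(dd^c u(\cdot+\tau))^n$ carrying the shifted density $f(\cdot+\tau)$. This is resolved by the Minkowski/concavity inequality for the Monge--Amp\`ere operator on continuous psh functions, which converts the assumption $f^{1/n} \in \smoo_{\omega}(\overline{\Omega})$ into an $O(\omega(\|\tau\|))$ correction absorbable by $C'\omega(\|\tau\|)\rho$; verifying this inequality and the comparison principle on the shifted, only Lipschitz domain $\Omega_\tau$ (rather than on $\Omega$ itself) is the technical crux. The boundary estimate, by contrast, is routine once the one-sided $\rho$-barriers are in hand.
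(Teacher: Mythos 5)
Your proposal is correct, and its skeleton coincides with the paper's: (i) the barrier $\rho(z)-\|z-\zeta\|^2$ gives $B$-regularity and hence existence/uniqueness via B\l{}ocki's theorem; (ii) the barriers $\tilde\phi+A\rho$ and $A\rho-\tilde\phi$ give the boundary estimate $|u(z)-u(\zeta)|\le C\,\omega(\|z-\zeta\|)$ for $\zeta\in\bdy\Omega$ (this is exactly the paper's Step~2, with Lemma~\ref{l:aux-psh} supplying the fact you assert about $\cHess\tilde\phi$ being bounded); (iii) a translation argument with an $\omega(\|\tau\|)$-sized correction propagates the estimate inside. Where you genuinely diverge is in the execution of step (iii), in two ways. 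First, to conclude $\tilde w\le u$ on $\Omega\cap(\Omega-\tau)$ you invoke the Bedford--Taylor comparison principle, whereas the paper glues $v_\tau$ with $u$ \`a la Walsh to produce a competitor $V_\tau\in\Bloclass$ and then uses only the envelope property $V_\tau\le u$; both mechanisms are valid (and, incidentally, the comparison principle needs no boundary regularity of $\Omega\cap(\Omega-\tau)$, so the ``Lipschitz domain'' worry you raise is moot). Second, and more substantively, you handle the density mismatch by quoting the Minkowski-type superadditivity $\big[(dd^c(a+b))^n\big]^{1/n}\ge\big[(dd^c a)^n\big]^{1/n}+\big[(dd^c b)^n\big]^{1/n}$ for \emph{two merely continuous} psh functions $a,b$. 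Because your corrector is $C'\omega(\|\tau\|)\rho$ with $\rho$ non-smooth, you really do need this general version, which is a deep result (it rests on Dinew's mixed Monge--Amp\`ere inequality; see \cite{guedj-zeriahi:2017}), far heavier than anything the paper uses. The paper avoids this by taking the corrector to be $C_f\,\omega(\|\tau\|)\|z\|^2$, so that only the case where one summand is a multiple of $\|z\|^2$ is needed, and that case it proves by hand: regularize $u$, apply the elementary determinant inequality of Ha--Khanh (Lemma~\ref{l:positive-matrix}), and pass to the limit (cf.\ \cite[inequality~(3.10)]{ha-khanh:2015}). You could effect the same reduction in your argument by writing $\rho=\|z\|^2+(\rho-\|z\|^2)$ and absorbing the psh remainder into $u(\cdot+\tau)$, which costs nothing since mixed Monge--Amp\`ere terms of continuous psh functions are positive. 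So: no gap, but as written your proof leans on a much stronger black box than necessary, and the paper's route is self-contained where yours is not.
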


\begin{remark} \label{rem:detailed-proof}
As discussed in Section~\ref{sec:intro}, the above theorem is strongly motivated by the work
of Li \cite{li:2004}. In particular, our
sufficient condition involving a function
$\rho \in {\rm psh}(\Omega) \cap \cont$ is inspired by \cite[Corollary~2.2]{li:2004}.
A careful reading of the proofs in \cite{li:2004} indicates that the proof of
\cite[Corollary~2.2]{li:2004} relies on \cite[Proposition~2.1]{li:2004}.
Despite the connection of Theorem~\ref{th:general-reg-CMA} with \cite{li:2004}
and a {\em seeming} similarity in the proofs, we feel that Theorem~\ref{th:general-reg-CMA}
requires a detailed proof. This is because there seems to be a slight lack of clarity in the proof
of \cite[Proposition~2.1]{li:2004}. For instance:
\begin{enumerate}
  \item[(a)] What is referred to as ``$v$'' in the statement of \cite[Proposition~2.1]{li:2004}
  is not the function $v$ defined at the beginning of the proof of this proposition; 
  it is\,---\,per the discussion on \cite[p.~122]{li:2004}\,---\,the upper-semicontinuous regularization
  of a certain function ``$u$'' defined on p.~122. However, as defined, this $u$ is manifestly
  H{\"o}lder continuous and needs no regularization. For the same reason, furthermore, the subsequent
  long argument for why $u$ is H{\"o}lder continuous on $\Omegabar$ is unnecessary.
  \smallskip

  \item[(b)] The above-mentioned function $u$ is given as the upper envelope of a certain family; see
  \cite[p.~122]{li:2004}. It is not clear why
  $(dd^c u)^n = f \V_n$, despite the reference to \cite{bedford-taylor:1976}, because the latter
  family seems too small for the intended purpose. 
\end{enumerate}
\end{remark}

Adapting the ideas of \cite[Proposition~2.1]{li:2004} and
\cite[Theorem~6.2]{bedford-taylor:1976}, we give a thorough proof of
Theorem~\ref{th:general-reg-CMA}. The next two lemmas will be useful for the proof.

\begin{lemma}\label{l:aux-psh}
Let $\Omega \varsubsetneq \Cn$ be a bounded domain and let $\psi \in \smoo^{1,1}(\overline{\Omega}; \R)$.
Suppose that $\Omega$ admits $\rho \in {\rm psh}(\Omega) \cap \cont$ such that
$ \Omega= {\rho}^{-1}(-\infty, 0)$,
$\bdy \Omega = \rho^{-1}\{0\}$, and $\rho(\bcdot) -  \|\bcdot\|^2 \in {\rm psh}(\Omega)$.
Then, there exists $K > 0$, large enough, such that $\psi (\bcdot) + K \rho (\bcdot ) \in 
{\rm psh} (\Omega) \cap \smoo (\overline{\Omega})$.
\end{lemma}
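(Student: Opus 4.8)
The plan is to exploit the additive structure by writing
$$
  \psi + K\rho \,=\, \big(\psi + K\|\bcdot\|^2\big) \,+\, K\big(\rho - \|\bcdot\|^2\big),
$$
and to show that, for $K$ large enough, each summand lies in ${\rm psh}(\Omega)$. The second summand is plurisubharmonic by hypothesis for every $K > 0$, so the entire burden falls on the first. Continuity on $\overline{\Omega}$ is immediate: $\psi \in \smoo^{1,1}(\overline{\Omega};\R) \subset \cont$ by the definition of the H{\"o}lder classes, and $\rho \in \cont$ by assumption, whence $\psi + K\rho \in \smoo(\overline{\Omega})$.

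For the first summand, set $g := \psi + K\|\bcdot\|^2$. Since $\psi \in \smoo^{1,1}(\overline{\Omega}; \R)$, its first-order partials are Lipschitz, so the second-order distributional derivatives of $\psi$ lie in $\leb{\infty}(\Omega)$; in particular the complex Hessian $\cHess \psi$ is a Hermitian matrix with $\leb{\infty}$-entries, and after absorbing a dimensional constant there is $M = M(\psi) > 0$ with $\cHess \psi(z) \geq -M\,I$ for a.e.\ $z \in \Omega$, where $I$ denotes the identity matrix. As a direct computation gives $\cHess \|\bcdot\|^2 \equiv I$, we obtain, for any $K \geq M$,
$$
  \cHess g(z) \,=\, \cHess \psi(z) + K\,I \,\geq\, (K - M)\,I \,\geq\, 0
  \quad \text{for a.e.\ } z \in \Omega.
$$

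Fixing such a $K$, it remains to upgrade this a.e.\ Hessian inequality to genuine plurisubharmonicity of the merely $\smoo^{1,1}$ function $g$, which I would do by mollification. On any $\Omega' \Subset \Omega$, the convolutions $g_{\eps} := g * \chi_{\eps}$ are smooth for small $\eps$, and since convolution with the nonnegative kernel $\chi_{\eps}$ commutes with distributional differentiation, $\cHess g_{\eps} = (\cHess g) * \chi_{\eps} \geq 0$ pointwise; hence $g_{\eps} \in {\rm psh}(\Omega') \cap \smoo^{\infty}(\Omega')$. Because $g$ is continuous, $g_{\eps} \to g$ uniformly on compact subsets of $\Omega'$, and a locally uniform limit of plurisubharmonic functions is plurisubharmonic, so $g \in {\rm psh}(\Omega')$. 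As $\Omega' \Subset \Omega$ was arbitrary, $g \in {\rm psh}(\Omega)$. Therefore $\psi + K\rho = g + K(\rho - \|\bcdot\|^2)$ is a sum of functions in ${\rm psh}(\Omega)$ and hence lies in ${\rm psh}(\Omega)$; combined with the continuity noted above, this yields $\psi + K\rho \in {\rm psh}(\Omega) \cap \smoo(\overline{\Omega})$, as claimed.

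The only non-routine point is this last mollification step, which is where the weak $\smoo^{1,1}$ regularity of $\psi$ must be handled carefully; the remaining ingredients\,---\,the lower Hessian bound extracted from the $\smoo^{1,1}$ norm, the identity $\cHess\|\bcdot\|^2 = I$, and the stability of ${\rm psh}(\Omega)$ under sums and locally uniform limits\,---\,are standard.
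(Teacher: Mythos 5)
Your proof is correct and complete. The paper states Lemma~\ref{l:aux-psh} without proof, treating it as routine, and your argument\,---\,the splitting $\psi + K\rho = \big(\psi + K\|\bcdot\|^2\big) + K\big(\rho - \|\bcdot\|^2\big)$, the a.e.\ lower bound $\cHess{\psi} \geq -M\,{\sf I}_n$ extracted from the $\smoo^{1,1}$ regularity, and the mollification step upgrading the a.e.\ Hessian inequality to genuine plurisubharmonicity of the continuous function $\psi + K\|\bcdot\|^2$\,---\,is precisely the standard argument that the hypothesis $\rho(\bcdot) - \|\bcdot\|^2 \in {\rm psh}(\Omega)$ is designed to enable, with the one delicate point (passing from an a.e.\ distributional inequality to plurisubharmonicity) handled correctly.
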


\begin{lemma}[Ha--Khanh, {\cite[Lemma~3.4]{ha-khanh:2015}}] \label{l:positive-matrix}
Let ${\sf M} \in M_{n \times n}(\C)$ be a non-negative, $n \times n$ Hermitian matrix
and let $\beta \geq 0$. Then,
$$
  \det ( {\sf M} + \beta \, {\sf I}_n ) \geq \sum_{k=0}^n {\beta}^k (\det {\sf M})^{\frac{n-k}{n}},
$$
where ${\sf I}_n \in M_{n \times n}(\C)$ denotes the identity matrix.
    
\end{lemma}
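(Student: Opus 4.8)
The plan is to diagonalize and reduce to an inequality among elementary symmetric polynomials. Since ${\sf M}$ is Hermitian and non-negative, I would first write ${\sf M} = {\sf U}^* {\sf D} {\sf U}$ with ${\sf U}$ unitary and ${\sf D} = {\rm diag}(\lambda_1, \dots, \lambda_n)$, where $\lambda_1, \dots, \lambda_n \geq 0$ are the eigenvalues of ${\sf M}$. As the determinant is unitarily invariant, ${\sf M} + \beta {\sf I}_n$ has eigenvalues $\lambda_i + \beta$, whence
\[
  \det({\sf M} + \beta {\sf I}_n) = \prod_{i=1}^n (\lambda_i + \beta) = \sum_{k=0}^n \beta^k \, e_{n-k}(\lambda),
\]
where $e_m(\lambda)$ denotes the $m$-th elementary symmetric polynomial in $\lambda_1, \dots, \lambda_n$ (so $e_0 = 1$ and $e_n = \prod_i \lambda_i = \det {\sf M}$). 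Since every coefficient $\beta^k$ is non-negative, the desired inequality follows once I establish the termwise bound $e_m(\lambda) \geq (\det {\sf M})^{m/n}$ for each $m \in \{0, 1, \dots, n\}$ (taking $m = n-k$).

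The core step is this termwise inequality, which is a form of Maclaurin's inequality and which I would prove by AM--GM. By definition, $e_m(\lambda)$ is a sum of $\binom{n}{m}$ products, each a product of $m$ distinct eigenvalues; all these products are non-negative. Applying the AM--GM inequality to these $\binom{n}{m}$ terms gives
\[
  \frac{e_m(\lambda)}{\binom{n}{m}} \geq \Bigl( \textstyle\prod \lambda_{i_1} \cdots \lambda_{i_m} \Bigr)^{1/\binom{n}{m}},
\]
where the product on the right ranges over all $m$-element subsets $\{i_1, \dots, i_m\}$. Each index $i$ occurs in exactly $\binom{n-1}{m-1}$ of these $m$-subsets, so the product on the right equals $\bigl( \prod_{i=1}^n \lambda_i \bigr)^{\binom{n-1}{m-1}} = (\det {\sf M})^{\binom{n-1}{m-1}}$. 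Using the elementary identity $\binom{n-1}{m-1} / \binom{n}{m} = m/n$ together with $\binom{n}{m} \geq 1$, I obtain
\[
  e_m(\lambda) \geq \binom{n}{m} (\det {\sf M})^{m/n} \geq (\det {\sf M})^{m/n},
\]
as required.

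Finally, I would note that the boundary cases cause no trouble: if $\det {\sf M} = 0$ then $(\det {\sf M})^{m/n} = 0$ for $m \geq 1$ and the inequality $e_m(\lambda) \geq 0$ is clear, while $m = 0$ gives the equality $e_0 = 1 = (\det {\sf M})^0$. Summing the termwise bounds against the non-negative weights $\beta^k$ then yields the claimed estimate. The only genuinely delicate point is the multiplicity count $\binom{n-1}{m-1}$ combined with the AM--GM step; everything else is bookkeeping.
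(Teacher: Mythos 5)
Your proof is correct, but note that the paper itself offers no proof of this statement: it is quoted verbatim as Lemma~3.4 of Ha--Khanh \cite{ha-khanh:2015}, so there is no in-paper argument to compare against. Your route is the natural self-contained one: diagonalize to get $\det({\sf M}+\beta\,{\sf I}_n)=\sum_{k=0}^n \beta^k e_{n-k}(\lambda)$, then establish the termwise bound $e_m(\lambda)\geq (\det {\sf M})^{m/n}$ via AM--GM; the multiplicity count $\binom{n-1}{m-1}$, the identity $\binom{n-1}{m-1}/\binom{n}{m}=m/n$, and the handling of $\det {\sf M}=0$ are all right, and in fact you prove the stronger bound $e_m(\lambda)\geq \binom{n}{m}(\det {\sf M})^{m/n}$, which is precisely Maclaurin's inequality. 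It is worth observing that this stronger form yields $\det({\sf M}+\beta\,{\sf I}_n)\geq \big(\beta+(\det {\sf M})^{1/n}\big)^n$, which also follows in one line from Minkowski's determinant inequality $\det(A+B)^{1/n}\geq (\det A)^{1/n}+(\det B)^{1/n}$ applied with $A={\sf M}$, $B=\beta\,{\sf I}_n$; discarding the binomial coefficients then gives the lemma. Either way, your argument is complete and would serve as a valid substitute for the citation.
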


\begin{proof}[The proof of Theorem~\ref{th:general-reg-CMA}]
We fix a $\phi \in \smoo^{1,1}(\bdy \Omega; \R)$ and an $f \geq 0$ satisfying
$f^{1/n} \in \smoo_{\omega}(\overline{\Omega})$. As we recall from Section~\ref{subsec:defn-prpn},
\begin{equation} \label{eqn:Blo-env}
  \widehat{\mathscr{B}}(\phi, f) = \{ v \in {\rm psh}(\Omega) \cap \smoo(\overline{\Omega}):
  v|_{\bdy \Omega}= \phi, \ (dd^c v)^n \geq f \V_n \}.
\end{equation}
Let $u$ be the upper envelope of this Perron--Bremermann family, i.e.,
\begin{align} \label{eqn:soln-candidate}
  u(z) &= \sup \{ v(z): v \in \widehat{\mathscr{B}} (\phi, f) \}, \ z \in \overline{\Omega}.
\end{align}
We shall present the proof in the following four steps. In the first step, we explain why the
function $u$ as defined in \eqref{eqn:soln-candidate} solves \eqref{eqn:D-P_CMA}. In the
subsequent steps, we will show that $u \in \omgclass$.%
\medskip

\noindent{\textbf{Step 1.}} \emph{Showing that $u$ solves the Dirichlet problem \eqref{eqn:D-P_CMA} }
\smallskip

\noindent{First,} we note that for each $\zeta \in \bdy \Omega$, $\Omega$ admits a
strong plurisubharmonic barrier $v_{\zeta}$. Define
$$
  v_{\zeta}(z):= - \|z - \zeta\|^2 + \rho(z), \quad \forall z \in \Omegabar.
$$
Clearly, given the assumptions on $\rho$, $v_{\zeta}$ is a strong plurisubharmonic barrier
for each $\zeta \in \bdy \Omega$. So, by Result~\ref{res:B-reg_equiv-prop},
$\Omega$ is a $B$-regular domain. Thus, by Result~\ref{res:B-reg_CMA-soln}, the function
$u \in {\rm psh}(\Omega) \cap \smoo(\overline{\Omega})$ as defined by \eqref{eqn:soln-candidate}
solves the Dirichlet problem \eqref{eqn:D-P_CMA} with data $(f, \phi)$ as fixed above.
\medskip

\noindent{\textbf{Step 2.}} \emph{Showing that there exists $ K >0$ such that
$|u(z) - u (\zeta)| \leq K \omega ( \|z - \zeta\|) $ for all
$(z, \zeta) \in \overline{\Omega} \times \bdy \Omega$}
\smallskip

\noindent{We} extend $\phi$ to a function in $\smoo^{1,1}(\overline{\Omega}; \R)$ and call
it $\phi$ as well. By Lemma~\ref{l:aux-psh}, there is $K>0$ such that both of the functions
$$
  v(z):= \phi(z) + K \rho (z) \quad \text{and} \quad \widetilde{v}(z):= - \phi(z) + K \rho (z)
$$
are in ${\rm psh}(\Omega) \cap \cont$. And moreover, we can choose $K$ so large that
$(dd^cv)^n \geq f \V_n$. Clearly, $v \in \Bloclass$ and hence $v \leq u$. Also,
$u + \widetilde{v} \in {\rm psh}(\Omega) \cap \smoo(\overline{\Omega})$ and
$(u + \widetilde{v})|_{\bdy \Omega}=0$. Therefore,
\begin{equation} \label{eqn:sub-sup-estimates}
  v \leq u \leq - \widetilde{v} \ \text{ on } \overline{\Omega} \quad \text{and} \quad
  u|_{\bdy \Omega}=v|_{\bdy \Omega}= -\widetilde{v}|_{\bdy \Omega} = \phi,
\end{equation}
where the second inequality holds because of the maximum principle for plurisubharmonic functions.
Now, as $\omega$ is concave and $\omega(0)= 0$, there exists $c>0$ such that 
$$
  \omega(x) \geq c x \quad \text{near } 0 \in \R.
$$
So, we must have $\phi \in \omgclass$ and hence $v, \widetilde{v} \in \omgclass$.
Thus, using \eqref{eqn:sub-sup-estimates}, there exists $K_1>0$ such that
\begin{equation} \label{eqn:soln-aux-estimate}
  |u(z) - u (\zeta)| \leq K_1 \omega ( \|z - \zeta\|) \quad
  \forall (z, \zeta) \in \overline{\Omega} \times \bdy \Omega.
\end{equation}

\medskip

\noindent{\textbf{Step 3.}} \emph{Constructing auxiliary functions $V_{\tau}: \Omegabar \to \R$
for $\tau \in \Cn, \ \|\tau\|$ small, such that $V_{\tau} \in \Bloclass$}
\smallskip

\noindent{For} $\tau \in \Cn, \ \|\tau\|$ small enough, we define
\begin{equation} \label{eqn:vtau}
  v_{\tau}(z):= u (z + \tau) + C_f \omegatau \|z\|^2 - K_1 \omegatau - K' \omegatau \quad
  \forall z \in \overline{\Omega - \tau},
\end{equation}
where $C_f>0$ is a constant (that depends only on $f$) to be determined later
(see \eqref{eqn:C_f}), $K_1>0$ is given by $\eqref{eqn:soln-aux-estimate}$, and $K'>0$ is so chosen
that $K' > C_f \sup_{z \in \Omegabar} \|z\|^2$.
\smallskip

Now, using \eqref{eqn:soln-aux-estimate} and \eqref{eqn:vtau}, we get
\begin{align}\label{eqn:vtau-estm}
  v_{\tau}(z) \leq u(z) + (C_f \|z\|^2 - K')\omegatau \leq u(z) \quad
  \forall z \in \bdy (\Omega - \tau) \cap \Omegabar,
\end{align}
where the second inequality is due to the choice of $K'$ as above.
\smallskip

We now appeal to the following standard construction\,---\,which is due to Walsh
\cite{walsh:1968}\,---\,to complete the proof. Define for $\tau \in \Cn, \ \|\tau\|$ small enough,
$$
  V_{\tau}: \Omegabar \to \R, \  
  V_{\tau}(z):=
  \begin{cases}
   u(z) \quad  &\text{if } z \in \Omegabar \setminus (\Omega - \tau), \\
   \max \{u(z), v_{\tau}(z)\} \quad &\text{if } z \in \Omegabar \cap \overline{\Omega - \tau}.
  \end{cases}
$$
In view of \eqref{eqn:vtau-estm}, it is easy to see that $V_{\tau}$ is well-defined
and is in ${\rm psh}(\Omega) \cap \cont$. We claim that $V_{\tau} \in \Bloclass$.
\smallskip

Note, by \eqref{eqn:soln-aux-estimate}, we get $v_{\tau} (\zeta) \leq u (\zeta)$ for each
$\zeta \in \bdy \Omega \cap (\Omega - \tau)$. Therefore,
$$
  V_{\tau} (\zeta) = u(\zeta) = \phi(\zeta) \quad \forall \zeta \in \bdy \Omega,
$$
by the definition of $V_{\tau}$. Thus, to establish the above claim, it remains to show that
$(dd^c V_{\tau})^n \geq f \V_n$. In view of \cite[Proposition~2.8]{bedford-taylor:1976},
to establish the latter, it suffices to show that
\begin{equation} \label{eqn:vtau_MA-estm-final}
  (dd^c v_{\tau})^n \geq f \V_n \quad \text{on } \Omega \cap (\Omega - \tau) .
\end{equation}

We shall now choose $C_f>0$ as introduced in \eqref{eqn:vtau}. Let $c_f >0$ be such that 
\begin{equation} \label{eqn:density-estm}
  |f^{1/n}(x) - f^{1/n}(y)| \leq c_f \, \omega (\|x-y\|) \quad \forall x, \, y \in \Omegabar.
\end{equation}
Let $A_n>0$ be the constant as given by \eqref{eqn:An} so that
$(dd^c U)^n = A_n (\det \cHess{U}) \V_n$ for any $U \in \smoo^2(\Omega)$. Let
\begin{equation} \label{eqn:C_f}
  C_f > \max_{k = 1, \cdots, n} {n \choose k}^{1/k} {A_n}^{-1/n} c_f
\end{equation}
This choice of $C_f$ will ensure that \eqref{eqn:vtau_MA-estm-final} can be established,
as we shall see below in this step.
\smallskip

Let $u_{\epsilon} ( \bcdot), \ \epsilon>0$, be the standard regularization of
$u(\bcdot)$ (see, for instance, \cite[Chapter~2]{klimek:1991}).
Let us take a sequence $\{\epsilon_{\nu}\}$ that decreases to $0$ and write
$u_{\nu}:= u_{\epsilon_{\nu}}$. Technically speaking, in order to deduce
\eqref{eqn:vtau-MA-estm}, we must now fix an arbitrary subsequence of $\{u_{\nu}\}$ but,
without loss of generality (and for simplicity of notation), we shall refer to the latter
as $\{u_{\nu}\}$.
It is a standard argument following from the definitions (see, \cite[Chapters~III-IV]{hormander:2003},
for instance) that there exists a sequence
$\{\nu_j\}\subset \Z_+$ such that the right-hand side of the following equation (where the
limit is stated is in the weak sense) is convergent a.e.
\begin{equation}\label{eqn:vtau-MA-estm-raw1}
  (dd^c v_{\tau})^n = \lim_{j \to \infty} \big(dd^c \big(u_{\nu_j}(\bcdot + \tau)
  + C_f\, \omegatau \|\bcdot\|^2 \big) \big)^n.
\end{equation}
Now, owing to convergence a.e. we have:
\begin{align}
  \lim_{j \to \infty}& \big(dd^c \big(u_{\nu_j}(\bcdot + \tau)
  + C_f\, \omegatau \|\bcdot\|^2 \big) \big)^n \notag \\
  &= A_n \lim_{j \to \infty} \det \big( \cHess{u_{\nu_j}(\bcdot + \tau)}
  + C_f \, \omegatau \, {\sf I}_n \big) \notag \\
  &\geq A_n \lim_{j \to \infty} \Big(\sum_{k=0}^n
  C^k_f \, \omegatau^k \det \big( \cHess{u_{\nu_j}(\bcdot + \tau)}\big)^{\frac{n-k}{n}} \Big) \ \
  \text{(by Lemma~\ref{l:positive-matrix})} \notag \\
  &= f(\bcdot + \tau) + A_n \sum_{k=1}^n A^{\frac{k}{n}-1}_n C^k_f \, \omegatau^k
  f^{\frac{n-k}{n}}(\bcdot + \tau),
  \label{eqn:vtau-MA-estm-raw2}
\end{align}
where the first equality and the first inequality hold a.e. while (due to the dominated
convergence theorem) in \eqref{eqn:vtau-MA-estm-raw2}, the limits are taken
in the weak sense. From \eqref{eqn:vtau-MA-estm-raw1} and \eqref{eqn:vtau-MA-estm-raw2} and the
fact that $\{\epsilon_{\nu}\}$ was arbitrary, we have
\begin{equation}\label{eqn:vtau-MA-estm}
  (dd^c v_{\tau})^n \geq f(\bcdot + \tau) + A_n \sum_{k=1}^n A^{\frac{k}{n}-1}_n C^k_f \, \omegatau^k
  f^{\frac{n-k}{n}}(\bcdot + \tau)
\end{equation}  
(also see \cite[inequality (3.10)]{ha-khanh:2015}).    
Again, by \eqref{eqn:density-estm} and \eqref{eqn:C_f} we have
\begin{align}\label{eqn:density-estm-final}
  f(z) &\leq f(z + \tau) + \sum_{k =1}^{n} {n \choose k} c^k_f \, \omegatau^k \, f^{\frac{n-k}{n}}(z + \tau)
  \notag \\
  &\leq f(z + \tau) + \sum_{k =1}^{n} A^{\frac{k}{n}}_n C^k_f \, \omegatau^k \,
  f^{\frac{n-k}{n}}(z + \tau) \quad  \forall z \in \Omega \cap (\Omega - \tau).
\end{align}
Combining \eqref{eqn:density-estm-final} and \eqref{eqn:vtau-MA-estm}, we can establish
\eqref{eqn:vtau_MA-estm-final}. This implies that $(dd^c V_{\tau})^n \geq f \V_n$ and hence
$V_{\tau} \in \Bloclass$.
\medskip

\noindent{\textbf{Step 4.}} \emph{Showing that $u \in \omgclass$}
\smallskip

\noindent{Let} $z, z' \in \Omegabar$, $\|z-z'\|$ be small enough, and write $\tau:= z'-z$.
Now, $V_{\tau} \in \Bloclass$ by Step~3. Note that $z \in \Omegabar \cap \overline{\Omega - \tau}$.
So we must get 
$$ 
  v_{\tau}(z) \leq V_{\tau}(z)\leq u(z) .
$$
Then, in view of $\eqref{eqn:vtau}$, the above inequality gives
\begin{align*}
  u(z') - u(z) &\leq (K_1+K' )\omegatau  - C_f \omegatau \|z\|^2 \leq (K_1+K' )\, \omega (\|z-z'\|).
\end{align*}
Reversing $z$ and $z'$ and proceeding exactly as above, we conclude that $u \in \omgclass$.
This completes the proof.
\end{proof}

\section{The proofs of Theorem~\ref{th:Holder-iff} and Theorem~\ref{th:B-reg-CMA}}
\label{sec:two-proofs}

Before we prove Theorem~\ref{th:Holder-iff}, we must mention that domains that satisfy condition~$(a)$ in this theorem are abundant. Clearly, any bounded
strongly pseudoconvex domain will satisfy this condition. Moreover, any
smoothly bounded pseudoconvex domain of finite type in $\C^2$, or smoothly bounded convex
domain of finite type in $\C^n$, satisfies condition~$(a)$. This follows from
\cite[Theorem~A]{fornaess-sibony:1989} or \cite[Theorem~4.1]{fornaess-sibony:1989}, as
appropriate, combined with a construction presented in the \emph{proof}
of \cite[Proposition~5.1]{fornaess-sibony:1989}.
(Also see \cite[Remark~1]{li:2004}.) The form that the consequences of
\cite[Theorem~A, Theorem~4.1]{fornaess-sibony:1989} take allows one to add to the list
of domains in $\Cn$, $n\geq 3$, that satisfy condition~$(a)$. The last point will be discussed in a
forthcoming work.

\begin{proof}[The proof of Theorem~\ref{th:Holder-iff}]
The implication $(a) \Rightarrow (b)$ follows from Theorem~\ref{th:general-reg-CMA}. To see this,
take $\omega (t):= t^{\varepsilon}, \ t \geq 0$, and use the fact that
whenever $f \equiv  0$ or $f>0$,
$$
  f \in \smoo^{0,\,\varepsilon}(\overline{\Omega}) \Leftrightarrow
  f^{1/n} \in \smoo^{0,\,\varepsilon} (\overline{\Omega}).
$$

Conversely, assume that $(b)$ holds. Define
$\phi: \bdy \Omega \to \R$ as
$$
  \phi(z):= - \|z\|^2.
$$
Clearly $\phi \in \smoo^{1, 1}(\bdy \Omega)$. Then, by $(b)$, there exist
$\varepsilon \in (0,1]$, and $u_{\phi} \in {\rm psh}(\Omega)
\cap \smoo^{0, \varepsilon}(\overline{\Omega})$ that solves the Dirichlet problem
\eqref{eqn:D-P_CMA} with $\phi$ as defined above, and with $f\equiv 0$ on
$\overline{\Omega}$. Now, define
\begin{equation} \label{eqn:psh-candidate}
  \rho (z): = u_{\phi}(z) +  \|z\|^2 \quad \forall z \in \overline{\Omega}.   
\end{equation}

Clearly, $\rho \in {\rm psh}(\Omega) \cap \smoo^{0, \varepsilon}(\overline{\Omega})$
and $ \rho(\bcdot) - \|\bcdot\|^2  \in {\rm psh}(\Omega)$. Also, it follows from
\eqref{eqn:psh-candidate} that $\rho|_{\bdy \Omega}\equiv 0$. By the maximum principle,
$\rho < 0$ on $\Omega$. This proves $(a)$.
\end{proof}

Finally, we deduce the proof of Theorem~\ref{th:B-reg-CMA} from
Theorem~\ref{th:general-reg-CMA}. Before doing so, we discuss
whether H{\"o}lder regularity of solutions to \eqref{eqn:D-P_CMA} can be expected
for all admissible data on general $B$-regular domains. The answer is in the \textbf{negative}, as
shown by Bharali--Masanta
\cite{{bharali-masanta:2025}}. They constructed a class of $B$-regular domains
for which \eqref{eqn:D-P_CMA} cannot admit H{\"o}lder regular solutions
for arbitrary data as prescribed by \eqref{eqn:D-P_CMA}, even for $\phi$ that is highly
regular. But their arguments suggest even more.
The following statement is a combination of \cite[Theorem~1.1]{bharali-masanta:2025} and
\cite[Example~2.4]{bharali-masanta:2025}.

\begin{result}[Bharali--Masanta, \cite{bharali-masanta:2025}] \label{res:non-Holder_CMA}
There exist $B$-regular domains $\Omega \varsubsetneq \Cn, n \geq 2$, with $\smoo^2$-smooth
boundary having the following property: there exist functions $\phi: \bdy \Omega \to \R$ that are
restrictions of $\smoo^{\infty}$-smooth functions defined on neighbourhoods of $\bdy \Omega$ such that,
for any $f \in \smoo(\Omegabar), \, f \geq 0$, the unique solution to the Dirichlet problem
\eqref{eqn:D-P_CMA} does not belong to $\smoo^{0, \varepsilon}(\Omegabar)$
for any $\varepsilon \in (0, 1]$.  
\end{result}

In fact, the \emph{proof} of \cite[Theorem~1.1]{bharali-masanta:2025} reveals that,
given any concave modulus of continuity $\widetilde{\omega}$, there exists a class of $B$-regular
domains with $\smoo^2$-smooth boundary
for which \eqref{eqn:D-P_CMA} cannot admit solutions in $\smoo_{\widetilde{\omega}}(\Omegabar)$
for arbitrary data $(f,\phi)$, even for $\phi$ highly regular. It is this that motivates the question
raised in Section~\ref{sec:intro}.
Theorem~\ref{th:B-reg-CMA} can be viewed as a counterpoint to the above discussion.

\begin{proof}[The proof of Theorem~\ref{th:B-reg-CMA}]
As $\Omega$ is $B$-regular, by Result~\ref{res:B-reg_equiv-prop},
it admits an exhaustion $\rho \in {\rm psh}(\Omega) \cap \smoo^{\infty}(\Omega)
\cap \smoo(\overline{\Omega})$ with $\bdy \Omega = \rho^{-1}\{0\}$ such that
$$
  \sum_{j,k =1}^{n} \frac{\bdy^2 \rho}{\bdy z_j\bdy \overline{z}_k} v_j \overline{v}_k \geq \|v\|^2,
  \quad \forall v=(v_1, \cdots, v_n) \in \Cn.
$$
Clearly, $\rho(\bcdot) - \|\bcdot\|^2 \in {\rm psh}(\Omega)$. Also,
by the maximum principle $ \Omega= {\rho}^{-1}(-\infty, 0)$.
\smallskip

Now, define $\omega_{\rho}:[0, \infty) \to [0, \infty) $ as
$$
  \omega_{\rho} (r):=
  \begin{cases}
      \sup \big\{|\rho(x) - \rho(y)| : x, y \in \overline{\Omega}, \ \|x-y\| \leq r \big\}, 
      \smallskip
      
      &\text{if } 0 \leq r \leq  {\rm diam}(\overline{\Omega}), \\
      \sup
      \big\{|\rho(x) - \rho (y)|: {x, y \in \overline{\Omega}} \big\},
      &\text{if } r \geq {\rm diam}(\overline{\Omega}).
  \end{cases}
$$
Let $\omega$ be the least concave majorant of $\omega_{\rho}$\,---\,see
\cite[Chapter~2, Section~6]{DeVore-Lorentz:1993} for details. It follows from
\cite[Lemma~6.1]{DeVore-Lorentz:1993} that $\omega$ is a (concave) modulus of
continuity and $\rho \in \smoo_{\omega}(\overline{\Omega})$.
The result now follows immediately from Theorem~\ref{th:general-reg-CMA}.
\end{proof}

\section*{Acknowledgements}
I am grateful to Prof. Gautam Bharali for introducing me to this subject and for suggesting one of
the problems in this paper. I also thank him for many valuable discussions and for his help in improving the
exposition of this article. This work is supported by a postdoctoral fellowship from Tata Institute
of Fundamental Research, Centre for Applicable Mathematics.
\smallskip

\end{document}